\newtheorem{thm}{Theorem}[section]
\newtheorem{lem}[thm]{Lemma}
\newtheorem{prop}[thm]{Proposition}
\newtheorem{cor}[thm]{Corollary}
\DeclareMathOperator{\GL}{GL}
\DeclareMathOperator{\PGL}{PGL}
\DeclareMathOperator{\PSL}{PSL}
\DeclareMathOperator{\PXL}{PXL}
\DeclareMathOperator{\Gal}{Gal}
\DeclareMathOperator{\Frob}{Frob}
\DeclareMathOperator{\Stab}{Stab}
\newcommand{\proj}{\mathrm{proj}}
\newcommand{\cA}{\mathcal{A}}
\newcommand{\cB}{\mathcal{B}}
\newcommand{\fp}{\mathfrak{p}}
\newcommand{\fP}{\mathfrak{P}}
\newcommand{\CC}{\mathbb{C}}
\newcommand{\FF}{\mathbb{F}}
\newcommand{\NN}{\mathbb{N}}
\newcommand{\QQ}{\mathbb{Q}}
\newcommand{\RR}{\mathbb{R}}
\newcommand{\Qbar}{\overline{\QQ}}
\newcommand{\Fbar}{\overline{\FF}}
\newcommand{\vect}[2]{
 \left(  \begin{matrix} #1 \\ #2 \end{matrix} \right)}
\begin{document}

\title{An Application of Maeda's Conjecture\\ to the Inverse Galois Problem}
\author{Gabor Wiese\footnote{Université du Luxembourg,
Faculté des Sciences, de la Technologie et de la Communication,
6, rue Richard Coudenhove-Kalergi,
L-1359 Luxembourg, Luxembourg, gabor.wiese@uni.lu}}
\maketitle

\begin{abstract}
It is shown that Maeda's conjecture on eigenforms of level~$1$ implies
that for every positive even $d$ and every $p$ in a density-one set of primes,
the simple group $\PSL_2(\FF_{p^d})$ occurs as the Galois group of
a number field ramifying only at~$p$.

MSC (2010): 11F11 (primary); 11F80, 11R32, 12F12 (secondary).
\end{abstract}

\section{Introduction}

The purpose of this paper is to support the approach to the inverse Galois problem
for certain finite groups of Lie type through automorphic forms. There have been a number
of promising results in the recent past, e.g.\ \cite{DiWi} and \cite{SL2} for groups
of the type $\PSL_2(\FF_{\ell^d})$, and \cite{KLS}, \cite{KLS2}, \cite{PartIII}
for more general groups.
The general idea is to take varying automorphic forms over~$\QQ$ and to study the images
of the residual Galois representations attached to them.
Currently one only obtains positive-density or infinity results.
The main technical obstacle to improving the mentioned results to density~$1$
seems to be the lack of control on the fields of coefficients of the automorphic forms
involved.

In the easiest case, that of `classical' modular forms, i.e.\ of automorphic forms
for $\GL_2$ over~$\QQ$, there is a strong conjecture due to Maeda on the coefficient
fields of level~$1$ modular forms.
In order to demonstrate the potential of the modular approach to the inverse Galois problem,
we show that the control on the coefficient fields provided by Maeda's conjecture
suffices to yield the following strong result on the inverse Galois problem.

\begin{thm}\label{thm:main}
Assume the following form of {\em Maeda's conjecture} on level~$1$ modular forms:
\begin{quote}
For any $k$ and any normalised eigenform $f \in S_k(1)$ (the space of cuspidal modular
forms of weight~$k$ and level~$1$), the coefficient field
$\QQ_f := \QQ(a_n(f) \;|\; n \in \NN)$
has degree equal to $d_k := \dim_\CC S_k(1)$ and the Galois group of its normal closure over~$\QQ$
is the symmetric group $S_{d_k}$.
\end{quote}

\begin{enumerate}[(a)]
\item Let $2 \le d \in \NN$ be even.
Then the set of primes $p$ such that there is a number field $K/\QQ$
ramified only at~$p$ with Galois group isomorphic to $\PSL_2(\FF_{p^d})$
has density~$1$.
\item Let $1 \le d \in \NN$ be odd.
Then the set of primes $p$ such that there is a number field $K/\QQ$
ramified only at~$p$ with Galois group isomorphic to $\PGL_2(\FF_{p^d})$
has density~$1$.
\end{enumerate}
\end{thm}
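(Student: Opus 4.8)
The plan is to realise the groups $\PSL_2(\FF_{p^d})$ and $\PGL_2(\FF_{p^d})$ as projective images of the residual Galois representations attached to level-$1$ eigenforms, using Maeda's conjecture to control the residue degrees at~$p$ for a density-one set of primes. First I would fix the parameter $d$ and, for each even weight $k$ with $d_k$ large, invoke Maeda's conjecture to record that $S_k(1)$ consists of a single $\Gal(\Qbar/\QQ)$-orbit of normalised eigenforms $f_k$, that its coefficient field $\QQ_{f_k}$ has degree $d_k$ over~$\QQ$, and that the normal closure $\tilde{L}_k$ of $\QQ_{f_k}$ satisfies $\Gal(\tilde{L}_k/\QQ) \cong S_{d_k}$ acting on the $d_k$ embeddings $\QQ_{f_k} \hookrightarrow \Qbar$ through the standard degree-$d_k$ permutation representation. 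The decisive consequence is a dictionary between arithmetic and combinatorics: for every prime $p$ unramified in $\tilde{L}_k$, the residue degrees of the primes $\lambda \mid p$ of $\QQ_{f_k}$ are exactly the lengths of the disjoint cycles of $\Frob_p \in S_{d_k}$. In particular there is a prime $\lambda \mid p$ of residue degree $d$, so that the residue field is $\FF_{p^d}$, precisely when $\Frob_p$ possesses a cycle of length~$d$.

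Next I would analyse the residual representation $\bar{\rho}_{f_k,\lambda} \colon \Gal(\Qbar/\QQ) \to \GL_2(\FF_{p^d})$ attached to such a~$\lambda$. It is unramified outside~$p$, and for all but finitely many $p$ it is irreducible with large image: since level-$1$ forms admit no complex multiplication, the results quoted in the introduction (\cite{DiWi}, \cite{SL2}) together with Dickson's classification of the finite subgroups of $\PGL_2(\Fbar_p)$ show that, outside a density-zero set of~$p$, the projective image $\bar{\rho}_{f_k,\lambda}^{\proj}$ is the full group $\PXL_2(\FF_{p^d})$ with projective field of definition equal to $\FF_{p^d}$. The dichotomy between $\PSL_2$ and $\PGL_2$ is settled by the determinant: because $k$ is even and $\det \bar{\rho}_{f_k,\lambda} = \chi_p^{\,k-1}$ with $\chi_p$ the mod-$p$ cyclotomic character and $k-1$ odd, the image of $\det$ lies in the squares of $\FF_{p^d}^\times$ exactly when $(p^d-1)/(p-1) \equiv d \pmod 2$ is even. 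Hence for even $d$ one lands in $\PSL_2(\FF_{p^d})$ and for odd $d$ in $\PGL_2(\FF_{p^d})$, and the fixed field $K$ of $\ker \bar{\rho}_{f_k,\lambda}^{\proj}$ is a number field ramified only at~$p$ with the asserted Galois group.

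It remains to upgrade the existence of one suitable $\Frob_p$-cycle to a density-one statement, and this is where I expect the main difficulty to lie. For a single weight~$k$ the density of primes $p$ whose $\Frob_p$ has a $d$-cycle tends, as $d_k \to \infty$, to $1 - e^{-1/d} < 1$, so no single weight suffices and several weights $k_1 < k_2 < \cdots < k_m$ must be combined. The set of primes failing for all of them is contained in $\bigcap_{i=1}^m B_{k_i}$, where $B_{k_i}$ denotes the set of $p$ whose Frobenius in $S_{d_{k_i}}$ has no $d$-cycle, and I would bound its density by applying the Chebotarev density theorem to the compositum $\tilde{L}_{k_1} \cdots \tilde{L}_{k_m}$.

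The crux is therefore the near-independence of the fields $\tilde{L}_{k_i}$. By Maeda each $\Gal(\tilde{L}_{k_i}/\QQ) \cong S_{d_{k_i}}$ has the single non-abelian simple composition factor $A_{d_{k_i}}$, and for distinct $d_{k_i} \ge 5$ these are pairwise non-isomorphic; so by Goursat's lemma the only possible overlaps between the $\tilde{L}_{k_i}$ are through the common abelianisation $\ZZ/2\ZZ$, and the Galois group of the compositum is the fibre product of the $S_{d_{k_i}}$ over these at most $2^m$ sign constraints. Conditioning on the signs, the conditional density of $\bigcap_{i=1}^m B_{k_i}$ is still bounded by a product of $m$ factors each bounded away from~$1$ (roughly $e^{-1/d}$), hence tends to~$0$ as $m \to \infty$. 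Since the genuine bad set is contained in $\bigcap_{i=1}^m B_{k_i}$ for every~$m$, it has density~$0$; combined with the density-zero exceptional set from the large-image step, this shows that the primes $p$ for which the desired group occurs have density~$1$, establishing both~(a) and~(b).
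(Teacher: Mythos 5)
Your overall strategy coincides with the paper's: Maeda's conjecture gives coefficient fields whose Galois closures have symmetric Galois groups; a prime of residue degree~$d$ above~$p$ corresponds to a $d$-cycle in $\Frob_p$ (the paper's Proposition~2.1); large-image results turn such primes into realisations of $\PXL_2(\FF_{p^d})$ ramified only at~$p$; and density one is achieved by combining infinitely many weights via Chebotarev on composita, the only obstruction to independence being the common quadratic subextensions of the fields $\tilde{L}_{k_i}$. Your Goursat analysis of the compositum matches the paper's use of Lemma~18.3.9 of Fried--Jarden, and your determinant computation settling the $\PSL_2$ versus $\PGL_2$ dichotomy is correct (it is in substance the parity statement of Ribet's Theorem~3.1, which the paper invokes directly).

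However, there is a genuine gap at exactly the point you yourself call the crux. Having reduced $\Gal(\tilde{L}_{k_1}\cdots\tilde{L}_{k_m}/\QQ)$ to a fibre product over sign constraints, you assert that ``conditioning on the signs, the conditional density of $\bigcap_{i=1}^m B_{k_i}$ is still bounded by a product of $m$ factors each bounded away from~$1$ (roughly $e^{-1/d}$).'' This is true but not automatic, and nothing in your argument establishes it. What is needed is that within each sign class of $S_n$ the proportion of permutations containing no $d$-cycle is still approximately $1-a(\lfloor n/d\rfloor)\approx e^{-1/d}$, i.e.\ that the property of containing a $d$-cycle is nearly equidistributed between even and odd permutations. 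The trivial estimate --- the conditional probability is at most twice the unconditional one --- gives only the bound $2e^{-1/d}$, which exceeds~$1$ for every $d\ge 2$; so for part~(a), and for all odd $d\ge 3$ in part~(b), your argument does not close as written. This equidistribution statement is precisely the technical core of the paper: Lemma~2.3(c),(d) and Corollary~2.4 prove
$\left| \#\cA_n^+(d) - \#\cA_n^-(d) \right| \le n!\cdot\frac{2}{n-1}$
by means of the bijection $g\mapsto g\circ(1\,2)$, analysing the exceptional permutations for which multiplication by a transposition destroys every $d$-cycle; this $O(1/n)$ discrepancy is then propagated through the two-field lemma (Lemma~2.5, where it appears as the error term~$\delta$) and the inclusion--exclusion sequence lemma (Lemma~2.6). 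To complete your proof you must supply this bound, or an equivalent sign-equidistribution estimate; once it is in place, your conditioning argument is a legitimate repackaging of the paper's induction.
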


Maeda's conjecture was formulated as Conjecture~1.2 in~\cite{HM}. It has
been checked up to weight 12000 (see~\cite{GMcA}).
We also mention that a generalisation of a weaker form of Maeda's conjecture to
squarefree levels has recently been proposed by Tsaknias~\cite{Tsaknias}.

Throughout the paper the notion of density can be taken to be either natural density
or Dirichlet density.

It is certainly possible to give an effective version of Theorem~\ref{thm:main}.
Suppose that Maeda's conjecture has been checked for weights up to~$B$.
Then for all $d \le \dim_\CC S_B(1)$ one can work out an explicit lower bound for
the density of the sets in the theorem, depending on~$B$.

The proof of Theorem~\ref{thm:main} is given in the remainder of the paper.
It is based on a meanwhile classical `big image result' of Ribet~\cite{Ri},
Chebotarev's density theorem, some combinatorics in symmetric groups,
and Galois theory.

\subsection*{Acknowledgements}

I wish to thank Sara Arias-de-Reyna for reading an earlier version of this
paper and her corrections and valuable remarks. I also thank J\"urgen Kl\"uners
for the reference to van der Waerden.
I acknowledge partial support by the Priority Program 1489 of the Deutsche
Forschungsgemeinschaft (DFG) and
by the Fonds National de la Recherche Luxembourg (INTER/DFG/12/10).

\section{Proof}

In this section the main result is proved. We use the convention that
the symmetric group $S_n$ is the group of permutations of the set $\{1,2,\dots,n\}$.

\subsection*{Splitting of primes in extensions with symmetric Galois group}

In this part we give a possibly nonstandard proof of the well-known fact that the
splitting behaviour of unramified primes in a simple extension $K(a)/K$
can be read off from the cycle type of the Frobenius, seen as an element of
the permutation group of the roots of the minimal polynomial of~$a$.
(A more `standard' proof would consider the factorisation into irreducibles
of the reduction of the minimal polynomial of~$a$, as in~\cite{vdW}, p.~198).

Let $M/K$ be a separable field extension of degree~$n$ and let $L/M$ be
the Galois closure of $M$ over~$K$. By the theorem of
the primitive element there is $a \in M$ such that $M = K(a)$. Let $f \in K[X]$ be
the minimal polynomial of~$a$ over~$K$ and let $a=a_1,a_2,\dots,a_n$ be the roots of~$f$ in~$L$.
The map $\psi: G := \Gal(L/K) \to S_n$, sending $\sigma$ to the permutation $\psi(\sigma)$
given by $\sigma(a_i) = a_{\psi(\sigma)(i)}$ is an injective group homomorphism,
which maps $H := \Gal(L/M)$ onto $\Stab_{S_n}(1) \cap \psi(G)$.

\begin{prop}\label{prop:nt}
Assume the preceding set-up with $K$ a number field.
Let $\fp$ be a prime of~$K$ and $\fP$ a prime of~$L$ dividing~$\fp$.
We suppose that $\fP/\fp$ is unramified.
Then the cycle lengths in the cycle decomposition of $\psi(\Frob_{\fP/\fp}) \in S_n$
are precisely the residue degrees of the primes of~$M$ lying above~$\fp$.
\end{prop}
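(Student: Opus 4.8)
The plan is to translate the statement into a question about the orbits of the decomposition group acting on the set of roots, and then to match those orbits, with their sizes, to the primes of~$M$ above~$\fp$ and their residue degrees. Since, via~$\psi$, the subgroup $H = \Gal(L/M)$ is the stabiliser of the root $a_1 = a$, the set $\{a_1, \dots, a_n\}$ is a transitive $G$-set isomorphic to the coset space $G/H$ of left cosets, on which $\sigma \in G$ acts by left translation; this is exactly the permutation action~$\psi$. Hence the cycle decomposition of $\psi(\Frob_{\fP/\fp})$ is the decomposition of $G/H$ into orbits under the cyclic subgroup generated by $\Frob_{\fP/\fp}$, and the cycle lengths are the orbit sizes. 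As $\fP/\fp$ is unramified, the inertia group is trivial and the decomposition group $D := D_{\fP/\fp}$ is cyclic with canonical generator $\Frob_{\fP/\fp}$; it therefore suffices to show that the $D$-orbits on $G/H$ correspond bijectively to the primes $\mathfrak{q}$ of~$M$ above~$\fp$, the size of each orbit being $f(\mathfrak{q}/\fp)$.

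For the bijection I would invoke the standard facts that the primes of~$L$ above~$\fp$ form a single $G$-orbit with stabiliser~$D$, hence are parametrised by $G/D$, and that, because $L/M$ is Galois with group~$H$, the primes of~$M$ above~$\fp$ are the $H$-orbits on this set, i.e.\ the double cosets $H\backslash G/D$. The inversion $g \mapsto g^{-1}$ identifies $H\backslash G/D$ with $D\backslash G/H$, which is precisely the set of $D$-orbits on $G/H$; explicitly, the prime $\mathfrak{q}$ attached to $HgD$ corresponds to the $D$-orbit of $g^{-1}H$.

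It then remains to match sizes. For the prime $\mathfrak{q}$ above~$\fp$ coming from $HgD$ I would pick the prime $\mathfrak{Q} = g\fP$ of~$L$ lying over it and use $D_{\mathfrak{Q}/\fp} = gDg^{-1}$ together with $D_{\mathfrak{Q}/\mathfrak{q}} = H \cap gDg^{-1}$. Since every extension in the tower $L/M/K$ is unramified over~$\fp$, a residue degree equals the order of the corresponding decomposition group, so multiplicativity of residue degrees gives
\[
f(\mathfrak{q}/\fp) = \frac{f(\mathfrak{Q}/\fp)}{f(\mathfrak{Q}/\mathfrak{q})} = \frac{|D|}{|H \cap gDg^{-1}|} = \frac{|D|}{|D \cap g^{-1}Hg|},
\]
the last step by conjugation; and by orbit--stabiliser the right-hand side is exactly the length of the $D$-orbit of $g^{-1}H$, which is the orbit assigned to $\mathfrak{q}$. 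This equates cycle lengths with residue degrees and finishes the argument.

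A more transparent route to the size-matching passes to completions: one has $M \otimes_K K_\fp \cong \prod_{\mathfrak{q}\mid\fp} M_{\mathfrak{q}}$, so the residue degrees (recall $e=1$ throughout) are the local degrees $[M_{\mathfrak{q}} : K_\fp]$, equal to the degrees of the irreducible factors of~$f$ over~$K_\fp$; and the roots of each factor form one orbit under $\Gal(L_\fP/K_\fp) = D = \langle \Frob_{\fP/\fp}\rangle$, i.e.\ one cycle of $\psi(\Frob_{\fP/\fp})$. The genuine arithmetic input in either route is merely the unramifiedness, which makes~$D$ cyclic with distinguished generator and keeps all ramification indices equal to~$1$; I expect the only real source of difficulty to be bookkeeping---keeping the left/right coset conventions and the conjugates $gDg^{-1}$ versus $g^{-1}Hg$ straight---rather than anything conceptually deep.
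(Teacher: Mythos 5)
Your proof is correct, but it takes a genuinely different route from the paper's. The paper argues by a chain of equivalences built on the single identity $f_{(g\fP\cap M)/\fp} = \min_{i}(\varphi^i \in g^{-1}Hg)$ with $\varphi = \Frob_{\fP/\fp}$: translating through $\psi$ and the stabiliser $\Stab_{S_n}(j)$, it shows directly that some prime of $M$ above $\fp$ has residue degree $d$ if and only if $\psi(\varphi)$ contains a $d$-cycle. That argument is short and essentially self-contained, but as written it only yields equality of the \emph{sets} of cycle lengths and residue degrees (which is all that is needed later, in Theorem~\ref{thm:general}). Your double-coset argument proves more: the inversion bijection $H\backslash G/D \to D\backslash G/H$, together with the size computation $f(\mathfrak{q}/\fp) = |D|/|D\cap g^{-1}Hg|$, gives an explicit bijection between the primes of $M$ above $\fp$ and the cycles of $\psi(\varphi)$ under which residue degree equals cycle length, i.e.\ the statement with multiplicities. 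The price is that you must import more standard facts (transitivity of $G$ on the primes above $\fp$ with stabiliser $D$, primes of $M$ as $H$-orbits, behaviour of decomposition groups under conjugation and in the subextension $L/M$) and keep the left/right coset bookkeeping straight, all of which you do correctly; the well-definedness of the correspondence on double cosets is implicit in your ``standard facts'' and is indeed standard. Your alternative route via completions and the factorisation of the minimal polynomial over $K_\fp$ is essentially the ``more standard'' proof that the paper explicitly mentions and deliberately avoids (the reference to van der Waerden), so of your two arguments the double-coset one is the more novel relative to the paper.
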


\begin{proof}
Let $g \in \Gal(L/K)$. Denote by $\Frob_{g\fP/\fp}$ the Frobenius element
of $g\fP/\fp$ in $\Gal(L/K)$
and by $f_{(g\fP\cap M)/\fp}$ the inertial degree of the
prime $g\fP \cap M$ of $M$ over~$\fp$.
Write $\varphi := \Frob_{\fP/\fp}$ for short. We have
$$ f_{(g\fP\cap M)/\fp} = \min_{i \in \NN} (\Frob_{g\fP/\fp}^i \in H)
= \min_{i \in \NN} (\varphi^i \in g^{-1}Hg).$$
From this we obtain the equivalences:
\begin{align*}
&\exists g \in G: \; f_{(g\fP \cap M)/\fp} = d \\
\Leftrightarrow\;\;\;
&\exists g \in G: \; \varphi^d \in g^{-1}Hg \textnormal{ and } \forall\, 1 \le i < d: \varphi^i \not\in g^{-1}Hg \\
\Leftrightarrow\;\;\;
&\exists g \in G: \; \psi(\varphi^d) \in \Stab_{S_n}(\psi(g^{-1})(1))
\textnormal{ and } \forall\, 1 \le i < d: \psi(\varphi^i) \not\in\Stab_{S_n}(\psi(g^{-1})(1))\\
\Leftrightarrow\;\;\;
&\exists j \in \{1,\dots,n\}: \; \psi(\varphi^d) \in \Stab_{S_n}(j)
\textnormal{ and } \forall\, 1 \le i < d: \psi(\varphi^i) \not\in\Stab_{S_n}(j)\\
\Leftrightarrow\;\;\;
&\psi(\varphi)\textnormal{ contains a $d$-cycle.}
\end{align*}
This proves the proposition.
\end{proof}

\subsection*{Combinatorics in symmetric groups}

We will eventually be interested in primes of a fixed residue degree~$d$ in an
extension with symmetric Galois group. The results of the previous part hence
lead us to consider elements in symmetric groups having a $d$-cycle,
which we do in this part.

The contents of this part is presumably also well-known. Since the techniques are very simple and
straight forward, I decided to include the proofs rather than to look for suitable references.
Let $d \ge 1$ be a fixed integer.
Define recursively for $i \ge 1$ and $1 \le j \le i$
$$ a(0):= 0, \;\;\; b(i,j) := \frac{1}{j! d^j}(1-a(i-j)), \;\;\; a(i) := \sum_{k=1}^i b(i,k).$$

\begin{lem}\label{lem:seq-Sn}
With the preceding definitions we have
$$a(i) = \sum_{j=1}^i \frac{(-1)^{j+1}}{j!d^j}
= 1- \exp(\frac{-1}{d}) + \sum_{j=i+1}^\infty \frac{(-1)^j}{j!d^j}.$$
\end{lem}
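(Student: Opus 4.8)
The plan is to treat the two asserted equalities separately, since the second is essentially bookkeeping while the first carries all the content. For the second equality I would start from the absolutely convergent expansion $\exp(-1/d) = \sum_{j=0}^\infty \frac{(-1)^j}{j!d^j}$, which gives $1-\exp(-1/d) = \sum_{j=1}^\infty \frac{(-1)^{j+1}}{j!d^j}$. Isolating the first $i$ terms and moving the tail across with a change of sign (using $\frac{(-1)^{j+1}}{j!d^j} = -\frac{(-1)^{j}}{j!d^j}$) then yields
$$1 - \exp(-1/d) + \sum_{j=i+1}^\infty \frac{(-1)^j}{j!d^j} = \sum_{j=1}^i \frac{(-1)^{j+1}}{j!d^j},$$
which is exactly the claimed identification. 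It therefore remains to establish the finite formula $a(i) = \sum_{j=1}^i \frac{(-1)^{j+1}}{j!d^j}$.

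For that finite formula I would use generating functions, which is natural because, writing $c_j := \frac{1}{j!d^j}$ (so that $c_0 = 1$), the numbers $c_j$ are precisely the Taylor coefficients of $C(x) := \sum_{j \ge 0} c_j x^j = \exp(x/d)$. Introducing $A(x) := \sum_{i \ge 0} a(i)\,x^i$, the first step is to translate the recursion $a(i) = \sum_{j=1}^i c_j\bigl(1 - a(i-j)\bigr)$ into a functional equation. The partial sums $\sum_{j=1}^i c_j$ assemble into $\frac{C(x)-1}{1-x}$, while the convolution $\sum_{j=1}^i c_j\,a(i-j)$ assembles into $\bigl(C(x)-1\bigr)A(x)$; summing over $i \ge 1$ (the case $i=0$ being trivially consistent, as both sides then vanish) gives the identity
$$A(x) = \frac{C(x)-1}{1-x} - \bigl(C(x)-1\bigr)A(x).$$
Solving for $A(x)$ and using $C(x) = \exp(x/d)$ produces the closed form
$$A(x) = \frac{C(x)-1}{(1-x)\,C(x)} = \frac{1}{1-x}\Bigl(1 - \exp(-x/d)\Bigr).$$

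Finally I would read off the coefficients: expanding $1-\exp(-x/d) = \sum_{j \ge 1} \frac{(-1)^{j+1}}{j!d^j}\,x^j$ and multiplying by $\frac{1}{1-x}$ simply forms partial sums, so that $a(i) = \sum_{j=1}^i \frac{(-1)^{j+1}}{j!d^j}$, as wanted. The same identity can alternatively be obtained by induction on $i$: substituting the inductive hypothesis into $a(i) = \sum_{j=1}^i c_j\bigl(1 - a(i-j)\bigr)$ and collapsing the resulting double sum via the convolution identity $\sum_{j=0}^n (-1)^{n-j} c_j\,c_{n-j} = \delta_{n,0}$, which is just the coefficientwise form of $\exp(x/d)\exp(-x/d) = 1$. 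I do not anticipate a genuine obstacle here, as the lemma is elementary; the only points requiring care are the faithful translation of the history-dependent recursion into the product $\bigl(C(x)-1\bigr)A(x)$ and the legitimacy of the power-series manipulations, which is unproblematic since only finitely many coefficients are ever extracted and the identities may be read in the ring of formal power series.
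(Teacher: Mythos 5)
Your proof is correct, but it takes a genuinely different route from the paper's. The paper proves the finite identity $a(i)=\sum_{j=1}^i \frac{(-1)^{j+1}}{j!d^j}$ by strong induction on~$i$: it substitutes the inductive hypothesis into the recursion $a(i+1)=\sum_{k=1}^{i+1}\frac{1}{k!d^k}(1-a(i+1-k))$, reindexes the resulting double sum by $m=j+k$, and collapses it using the binomial identity $\sum_{j=1}^{m-1}\binom{m}{j}(-1)^j=-1-(-1)^m$, i.e.\ the expansion of $(1-1)^m=0$. Your argument instead encodes the entire recursion at once in the functional equation $A(x)=\frac{C(x)-1}{1-x}-\bigl(C(x)-1\bigr)A(x)$ and extracts all the identities simultaneously from the closed form $A(x)=\frac{1-\exp(-x/d)}{1-x}$; your translation of the recursion into this equation is faithful (the $i=0$ check and the Cauchy-product bookkeeping are right), and the formal-power-series setting makes every step legitimate. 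The two proofs share the same combinatorial core --- your convolution identity $\sum_{j=0}^n(-1)^{n-j}c_j c_{n-j}=\delta_{n,0}$ is exactly the paper's binomial collapse, as your closing remark about the inductive alternative makes explicit --- but the packaging differs, and each has its advantages: the generating-function formulation is more conceptual, since it explains structurally why $\exp(-1/d)$ appears (it is the reciprocal of $C(x)=\exp(x/d)$) and why $a(i)$ is a partial-sum sequence (the factor $\frac{1}{1-x}$), whereas the paper's induction is entirely self-contained and elementary, needing nothing beyond the binomial theorem. Your treatment of the second equality as tail-shuffling of an absolutely convergent series matches what the paper leaves implicit.
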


\begin{proof}
This is a simple induction. For the convenience of the reader, we include
the inductive step:
\begin{align*}
a(i+1)
&= \sum_{k=1}^{i+1} b(i+1,k)
= \sum_{k=1}^{i+1} \frac{1}{k!d^k} (1 - a(i+1-k))\\
&= \sum_{k=1}^{i+1} \frac{1}{k!d^k} \left(1 - \sum_{j=1}^{i+1-k} \frac{(-1)^{j+1}}{j!d^j}\right)
= \sum_{k=1}^{i+1}\left(\frac{1}{k!d^k} - \sum_{j=1}^{i+1-k}\frac{(-1)^{j+1}}{k!j!d^{j+k}}\right)\\
&= \sum_{m=1}^{i+1}\frac{1}{m!d^m} + \sum_{m=2}^{i+1} \frac{1}{m!d^m} \sum_{j=1}^{m-1} \vect mj (-1)^j
= \sum_{m=1}^{i+1}\frac{(-1)^{m+1}}{m!d^m}.
\end{align*}
\end{proof}

For $i \to \infty$ the convergence $a(i) \to 1- \exp(\frac{-1}{d})$
is very quick because of the simple estimate of the error term
$\left| \sum_{j=i+1}^\infty \frac{1}{j^! d^j} \right | \le \frac{2}{(i+1)! d^{i+1}}$.

We now relate the quantities $a(i)$ and $b(i,j)$ to proportions in the symmetric group.
Let $n,j \in \NN$. Define
\begin{align*}
\cA_n(d) &:= \{ g \in S_n \;|\; g \textnormal{ contains at least one $d$-cycle }\},\\
\cB_n(d,j) &:= \{ g \in S_n \;|\; g \textnormal{ contains precisely $j$ $d$-cycles }\}.
\end{align*}

\begin{lem}\label{lem:Sn}
For all $n \ge 2d$ the following formulae hold, where $i := \lfloor \frac{n}{d} \rfloor$:
\begin{enumerate}[(a)]
\item\label{lem:Sn:a} $n! \cdot a(i) = \#\cA_n(d)$
\item\label{lem:Sn:b} $n! \cdot b(i,j) = \# \cB_n(d,j)$
\item\label{lem:Sn:c} $n! \cdot \frac{2n-d-1}{n(n-1)} (1-a(i-1))= \#\{g \in \cB_n(d,1) \;|\;
\textnormal{ the unique $d$-cycle contains $1$ or $2$}\}$.
\item\label{lem:Sn:d} $n! \cdot \frac{1}{n(n-1)} (1- a(i-2)) = \#\{g \in \cB_n(d,2) \;|\;
\textnormal{ one $d$-cycle contains $1$, the other $2$}\}$.
\end{enumerate}
\end{lem}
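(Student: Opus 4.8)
The plan is to reduce all four parts to a single counting principle and then to run an induction on $n$ (equivalently on $i = \lfloor n/d \rfloor$, since every proportion below will depend on $n$ only through $i$). Write $N(m)$ for the number of permutations in $S_m$ containing no $d$-cycle, so that $N(m)/m! = 1 - \#\cA_m(d)/m!$ by definition of $\cA_m(d)$. The basic observation is that a permutation with precisely $j$ $d$-cycles is determined uniquely by a choice of the $jd$ elements lying in its $d$-cycles, by an arrangement of those elements into $j$ disjoint $d$-cycles, and by a $d$-cycle-free permutation of the remaining $n-jd$ elements. Since the number of permutations of a fixed set of $jd$ elements of cycle type $(d^j)$ equals $(jd)!/(d^j j!)$, this decomposition gives
\[
\#\cB_n(d,j) = \binom{n}{jd} \cdot \frac{(jd)!}{d^j j!} \cdot N(n-jd) = \frac{n!}{d^j j!} \cdot \frac{N(n-jd)}{(n-jd)!}.
\]

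For (b) and (a) I would argue by induction on $n$, proving simultaneously that $\#\cA_n(d) = n!\, a(\lfloor n/d\rfloor)$, and hence $N(m)/m! = 1 - a(\lfloor m/d\rfloor)$. In the base range $n < d$ no $d$-cycle exists, so $\#\cA_n(d)=0=a(0)$. Since $\lfloor (n-jd)/d\rfloor = i-j$, the inductive hypothesis applied to the smaller groups $S_{n-jd}$ yields $N(n-jd)/(n-jd)! = 1 - a(i-j)$; substituting this into the displayed formula turns it into exactly $\#\cB_n(d,j) = n!\, b(i,j)$, which is (b). As $\cA_n(d)$ is the disjoint union of the $\cB_n(d,j)$ for $1 \le j \le i$, summing over $j$ and invoking $a(i) = \sum_{k=1}^i b(i,k)$ recovers $\#\cA_n(d) = n!\, a(i)$, closing the induction and giving (a).

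Parts (c) and (d) are refinements of the same bookkeeping: the $d$-cycle-free remainder contributes the factor $N(n-d)/(n-d)! = 1 - a(i-1)$ in (c) and $N(n-2d)/(n-2d)! = 1 - a(i-2)$ in (d), and the only new work is to count the admissible placements of the distinguished elements $1$ and $2$. For (c) I would count by inclusion-exclusion the $d$-subsets meeting $\{1,2\}$, obtaining $2\binom{n-1}{d-1} - \binom{n-2}{d-2}$, multiply by the $(d-1)!$ cyclic arrangements and by $N(n-d)$, and check that division by $n!$ collapses this to $(2n-d-1)/(n(n-1))$. For (d) I would choose the $d-1$ partners of $1$ and then the $d-1$ partners of $2$ from the remaining elements, arrange each cycle in $(d-1)!$ ways, multiply by $N(n-2d)$, and verify that the product of binomials and factorials telescopes to $(n-2)! = n!/(n(n-1))$.

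The main obstacle is organisational rather than conceptual: the decomposition in the first paragraph must be set up so that it counts permutations with \emph{exactly} $j$ $d$-cycles, which is precisely why the remainder is required to be $d$-cycle-free and why the factor $N(\cdot)$ rather than an arbitrary permutation count enters. One must also confirm that this remainder factor is genuinely independent of the chosen placement of the $j$ $d$-cycles, so that the decomposition is bijective, and keep careful track of the floor functions $\lfloor (n-jd)/d\rfloor = i-j$ throughout the simultaneous induction. The two algebraic simplifications in (c) and (d), where the binomial coefficients must be shown to cancel against $n!$ to yield the stated rational functions of $n$ and $d$, are routine but slightly delicate; the hypothesis $n \ge 2d$ is exactly what guarantees $i \ge 2$, so that $a(i-2)$ is defined and two disjoint $d$-cycles can occur.
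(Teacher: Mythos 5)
Your proposal is correct and follows essentially the same argument as the paper: a simultaneous induction on $n$ establishing (b) via the decomposition into a choice of $d$-cycles times a $d$-cycle-free remainder, summing over $j$ for (a), and the same refined placement counts for (c) and (d). The only difference is cosmetic: you choose the $jd$-element support at once and use the cycle-type count $(jd)!/(d^j j!)$, whereas the paper picks the $j$ $d$-cycles sequentially and divides by $j!$ --- these are identical quantities.
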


\begin{proof}
\eqref{lem:Sn:a} and \eqref{lem:Sn:b} are proved by induction for $n \ge 1$.
For $n < d$ (i.e.\ $i=0$), the equalities are trivially true. Now we describe the induction step:
\begin{align*}
\#\cB_n(d,j)
&= \frac{1}{j!} \cdot \left(\vect nd \cdot (d-1)!\right) \cdot \left(\vect {n-d}d \cdot (d-1)!\right)
\cdot \ldots \cdot \left(\vect {n-(j-1)d}d \cdot (d-1)!\right)\\
& \;\;\;\;\;\; \cdot (n-jd)! \cdot (1-a(i-j))\\
&= \frac{n!}{j! d^j}(1-a(i-j)).
\end{align*}
The first equality can be seen as follows: There are $j!$ ways of ordering the $j$ $d$-cycles.
The number of choices for the first $d$-cycle is given by $\vect nd \cdot (d-1)!$,
the one for the second is $\vect {n-d}d \cdot (d-1)!$, and so on. After having chosen
$j$, $d$-cycles $n-jd$ elements remain. Among these remaining elements we may only take
those that do not contain any $d$-cycle; their number is
$(n-jd)!\cdot (1-a(i-j))$ by induction hypothesis.

\eqref{lem:Sn:c} The number of elements in the set in question is
$$ \left(2 \vect{n-1}{d-1} - \vect{n-2}{d-2}\right)(d-1)! \cdot (n-d)!\cdot(1-a(i-1))
= n! \frac{2n-d-1}{n(n-1)} (1-a(i-1))$$
because $\vect{n-1}{d-1} \cdot (d-1)!$ is the number of choices for a $d$-cycle with
one previously chosen element (i.e.\ $1$ or $2$) and $\vect{n-2}{d-2}\cdot(d-1)!$
is the number of choices for a $d$-cycle containing $1$ and~$2$.

\eqref{lem:Sn:d} The number of elements in the set in question is
$$ \vect{n-2}{d-1}(d-1)! \cdot \vect{n-2-(d-1)}{d-1}(d-1)! \cdot (n-2d)!\cdot (1-a(i-2))
= n! \frac{1}{n(n-1)} (1-a(i-2))$$
because $\vect{n-2}{d-1} \cdot (d-1)!$ is the number of choices for a $d$-cycle
containing~$1$ and not containing~$2$ and $\vect{n-2-(d-1)}{d-1}\cdot (d-1)!$
is the number of choices for a $d$-cycle containing~$2$ among the elements remaining
after the first choice, and again $(n-2d)!\cdot (1-a(i-2))$ is the number of elements
remaining after the two choices such that they do not contain any $d$-cycle.
\end{proof}

We write $\cA_n^\pm(d)$ for the subsets of $\cA_n(d)$ consisting of the elements
having positive or negative signs.

\begin{cor}\label{cor:Sn}
Let $d,n \in \NN$, $n \ge 2d \ge 2$ and put $i := \lfloor \frac{n}{d} \rfloor$. Then the estimates
$$\left| \#\cA_n^+(d) - \#\cA^-_n(d) \right| \le n! \cdot \big( \frac{2n-d-1}{n(n-1)}(1-a(i-1))
+ \frac{1}{n(n-1)}(1-a(i-2)) \big) \le n! \cdot \frac{2}{n-1}$$
and
$$\left| \frac{\#\cA_n^+(d) - \#\cA_n^-(d)}{\#\cA_n(d)} \right|
\le \frac{1}{n-1} \cdot \frac{2}{1 - \exp(-\frac{1}{d}) - \frac{2}{(i+1)! d^{i+1}}}$$
hold.
\end{cor}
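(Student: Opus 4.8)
The plan is to express the signed difference as a sum of signs and to kill almost all of it with a sign-reversing involution, leaving an error term that Lemma~\ref{lem:Sn} evaluates exactly. First I would write $\#\cA_n^+(d) - \#\cA_n^-(d) = \sum_{g \in \cA_n(d)} \mathrm{sgn}(g)$ and introduce the involution $\iota \colon S_n \to S_n$, $\iota(g) := (1\,2)\,g$. As left multiplication by a transposition, $\iota$ is fixed-point-free and satisfies $\mathrm{sgn}(\iota(g)) = -\mathrm{sgn}(g)$. Grouping $S_n$ into the two-element orbits $\{g, \iota(g)\}$, every orbit contained in $\cA_n(d)$ contributes $0$ to the sum, so the only surviving contributions come from orbits meeting $\cA_n(d)$ in exactly one point. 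This yields $|\#\cA_n^+(d) - \#\cA_n^-(d)| \le \# E$, where $E := \{ g \in \cA_n(d) : \iota(g) \notin \cA_n(d)\}$; the point is precisely that $\iota$ does \emph{not} preserve $\cA_n(d)$.

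Identifying $E$ is the step I expect to be the crux. Multiplication by $(1\,2)$ fixes every cycle disjoint from $\{1,2\}$, splits the cycle through $1$ and $2$ into two cycles when they share one, and merges the cycles of $1$ and $2$ when they are distinct. If $g \in E$, then no cycle avoiding $\{1,2\}$ is a $d$-cycle — such a cycle would survive in $\iota(g)$ — so every $d$-cycle of $g$ meets $\{1,2\}$. If $1$ and $2$ lie in a common cycle, this cycle is then the only candidate $d$-cycle, so it has length $d$, and splitting it yields two cycles of lengths summing to $d$, each strictly shorter than $d$; hence $\iota(g)$ has no $d$-cycle, and $g$ lies in the set counted by Lemma~\ref{lem:Sn}\eqref{lem:Sn:c}. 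If $1$ and $2$ lie in distinct cycles, at least one of the two must be a $d$-cycle, and merging them produces a cycle of length $> d$; here $g$ falls into the set of Lemma~\ref{lem:Sn}\eqref{lem:Sn:c} when exactly one of the two is a $d$-cycle, and into that of Lemma~\ref{lem:Sn}\eqref{lem:Sn:d} when both are. The delicate point is to check that the split or merge never manufactures a new $d$-cycle; this holds because one of the relevant cycle lengths is pinned to equal $d$, forcing the complementary piece to be nonzero. The conclusion is that $E$ is the disjoint union of the sets of Lemma~\ref{lem:Sn}\eqref{lem:Sn:c} and~\eqref{lem:Sn:d}.

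Granting this, the first estimate is immediate: $\#E$ equals the sum of the two cardinalities furnished by Lemma~\ref{lem:Sn}, which is the middle expression, and bounding it by $n! \cdot \frac{2}{n-1}$ is elementary, since $0 \le a(i-1), a(i-2) \le 1$ lets one replace each factor $1 - a(\cdot)$ by $1$, reducing the bracket to $\frac{2n-d}{n(n-1)} \le \frac{2}{n-1}$ as $d \ge 1$. For the second estimate I would divide the first by $\#\cA_n(d) = n!\,a(i)$, using Lemma~\ref{lem:Sn}\eqref{lem:Sn:a}, to obtain the bound $\frac{2}{(n-1)\,a(i)}$; it then remains to bound $a(i)$ from below, and Lemma~\ref{lem:seq-Sn} together with its displayed tail estimate gives $a(i) \ge 1 - \exp(-\frac{1}{d}) - \frac{2}{(i+1)!\,d^{i+1}}$, which upon substitution yields exactly the stated inequality.
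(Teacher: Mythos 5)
Your proof is correct and takes essentially the same route as the paper's: multiplication by the transposition $(1\,2)$ as a sign-reversing map whose exceptional set is identified, via Lemma~\ref{lem:Sn}~\eqref{lem:Sn:c} and~\eqref{lem:Sn:d}, as exactly the permutations whose $d$-cycles all meet $\{1,2\}$, followed by the same elementary estimates using Lemma~\ref{lem:Sn}~\eqref{lem:Sn:a}, Lemma~\ref{lem:seq-Sn} and the tail bound. The only difference is presentational: you package the argument as a fixed-point-free sign-reversing involution with orbit cancellation, whereas the paper maps $\cA_n^+(d)$ minus the exceptional set injectively into $\cA_n^-(d)$ and invokes the symmetry between $+$ and $-$.
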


\begin{proof}
Consider the bijection $\phi: S_n \xrightarrow{g \mapsto g \circ (1 2)} S_n$.
For $j > 2$ the image of $\cA_n^+(d) \cap \cB_n(d,j)$ under $\phi$ lands in $\cA_n^-(d)$ because
the multiplication with $(1 2)$ can at most remove two $d$-cycles.
Now consider $g \in \cA_n^+(d) \cap \cB_n(d,2)$. Clearly $\phi(g) \in \cA_n^-(d)$ unless
one of the $d$-cycles contains~$1$ and the other one contains~$2$.
For $g \in \cA_n^+(d) \cap \cB_n(d,1)$ we find that $\phi(g) \in \cA_n^-(d)$ unless
the single $d$-cycle of~$g$ contains $1$ or~$2$.
In view of Lemma~\ref{lem:Sn} we thus obtain the inequality
$$ \#\cA_n^+(d) - \#\cA_n^-(d) \le n! \cdot \big( \frac{2n-d-1}{n(n-1)}(1-a(i-1))
+ \frac{1}{n(n-1)}(1-a(i-2)) \big) \le n! \cdot \frac{2}{n-1}.$$
By exchanging the roles of $+$ and $-$ we obtain the first estimate.
The second estimate then is an immediate consequence of Lemma~\ref{lem:seq-Sn} and the
trivial estimate of the error term mentioned after it.
\end{proof}

\subsection*{Density of primes with prescribed residue degree
in composite of field extensions with symmetric Galois groups}

\begin{lem}\label{lem:twofields}
Let $1 \le d \in \NN$, $K$ be a field and $L/K$, $F/K$ be two finite Galois extensions such that
$\Gal(L/K) \cong S_n$ with $n \ge \max(5,2d)$ and $L$ is not a subfield of~$F$.
Let $C \subseteq G:= \Gal(F/K)$ be a subset and put $c := \frac{\#C}{\#G}$
and $a := \frac{\#\cA_n(d)}{\#S_n} = a(\lfloor \frac{n}{d} \rfloor)$.

Let $X := \Gal(LF/K)$ and $Y$ be the subset of~$X$ consisting of those elements that
project to an element in $\cA_n(d) \subseteq S_n \cong \Gal(L/K)$
or to an element in~$C \subseteq \Gal(F/K)$ under the natural projections. Then
$$ \frac{\#Y}{\#X} = a + c - (1+\delta) ac, \textnormal{ where }
\begin{cases} \delta = 0 & \textnormal{ if } L \cap F = K,\\
|\delta| \le \frac{1}{n-1} \cdot \frac{2}{1 - \exp(-\frac{1}{d}) - \frac{2}{(1+\lceil \frac{n}{d} \rceil)! d^{1+\lceil \frac{n}{d} \rceil}}}  & \textnormal{ otherwise.}\end{cases}$$
\end{lem}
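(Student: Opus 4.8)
The plan is to realise $X=\Gal(LF/K)$ as a fibre product and to compute $\#Y$ by inclusion--exclusion. Write $D:=L\cap F$, let $\pi_L\colon X\to\Gal(L/K)\cong S_n$ and $\pi_F\colon X\to G$ be the restriction maps, and recall the standard fact that $(\pi_L,\pi_F)$ identifies $X$ with $\{(\sigma,\tau)\in S_n\times G : \sigma|_D=\tau|_D\}$, so that $\#X=n!\,\#G/[D:K]$; both $\pi_L$ and $\pi_F$ are surjective group homomorphisms and hence have fibres of constant size. Since $Y=\pi_L^{-1}(\cA_n(d))\cup\pi_F^{-1}(C)$, inclusion--exclusion gives $\#Y=\#\pi_L^{-1}(\cA_n(d))+\#\pi_F^{-1}(C)-\#\big(\pi_L^{-1}(\cA_n(d))\cap\pi_F^{-1}(C)\big)$, and by constancy of the fibres the first two terms equal $a\,\#X$ and $c\,\#X$. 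Everything therefore reduces to evaluating the intersection term.

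To control the intersection I first pin down $D$. The group $\Gal(L/D)$, the kernel of the restriction $\Gal(L/K)\to\Gal(D/K)$, is a normal subgroup of $S_n$, and it is nontrivial because $L\not\subseteq F$ forces $D\neq L$. As $n\ge5$, the only normal subgroups of $S_n$ are $1,A_n,S_n$, so either $D=K$ or $[D:K]=2$ with $\Gal(D/K)\cong S_n/A_n$ and the restriction $r_L\colon S_n\to\Gal(D/K)$ equal to the sign character. If $D=K$ the fibre product is the full direct product, the condition $\sigma|_D=\tau|_D$ is vacuous, and the intersection factorises as $\#\cA_n(d)\cdot\#C=ac\,\#X$; hence $\#Y/\#X=a+c-ac$ and $\delta=0$, settling that case.

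In the remaining case $[D:K]=2$ I split according to the sign. Writing $r_F\colon G\to\Gal(D/K)\cong\{\pm1\}$ for the restriction and $C^{\pm}:=C\cap r_F^{-1}(\pm1)$, the compatibility $\sigma|_D=\tau|_D$ means that even $\sigma$ pairs with $C^+$ and odd $\sigma$ with $C^-$, so the intersection has cardinality $\#\cA_n^+(d)\,\#C^+ + \#\cA_n^-(d)\,\#C^-$. Setting $E:=\#\cA_n^+(d)-\#\cA_n^-(d)$ and $F_C:=\#C^+-\#C^-$ and expanding, the cross terms cancel and this collapses to $\tfrac12\big(\#\cA_n(d)\,\#C+E\,F_C\big)$. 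Dividing by $\#X=n!\,\#G/2$ turns the intersection proportion into $ac+\frac{E\,F_C}{n!\,\#G}=(1+\delta)ac$ with $\delta=\frac{E}{\#\cA_n(d)}\cdot\frac{F_C}{\#C}$. Since $|F_C|\le\#C$ trivially and $|E|/\#\cA_n(d)$ is exactly the quantity bounded in Corollary~\ref{cor:Sn}, the stated estimate for $|\delta|$ follows.

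The genuinely substantive input is the sign-bias estimate of Corollary~\ref{cor:Sn}: the whole point is that a symmetric extension, intersected with another Galois extension, decouples up to the small defect measured by how unevenly the $d$-cycle elements of $S_n$ split between even and odd permutations. The main thing to get right is therefore the bookkeeping when $[D:K]=2$ — correctly matching even permutations with $C^+$ and odd ones with $C^-$, and checking that the cross terms in the product collapse to $\tfrac12E\,F_C$ — whereas the identification of $D$ through the normal-subgroup structure of $S_n$ (which is where the hypothesis $n\ge5$ enters) is the structural step that reduces the whole computation to just these two cases.
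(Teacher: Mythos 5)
Your proof is correct and essentially identical to the paper's: both pin down $[L\cap F:K]\le 2$ via the normal-subgroup structure of $S_n$, treat the split case directly, and in the quadratic case count the intersection as $\#\cA_n^+(d)\,\#C^+ + \#\cA_n^-(d)\,\#C^-$, arriving at $\delta=\frac{\#\cA_n^+(d)-\#\cA_n^-(d)}{\#\cA_n(d)}\cdot\frac{\#C^+-\#C^-}{\#C}$ and then invoking Corollary~\ref{cor:Sn}. Your fibre-product phrasing and the explicit $E$, $F_C$ expansion are purely cosmetic differences; note only that deducing the stated bound (written with $\lceil n/d\rceil$) from Corollary~\ref{cor:Sn} (written with $\lfloor n/d\rfloor$) involves the same harmless floor/ceiling discrepancy that the paper's own proof glosses over.
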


\begin{proof}
The intersection $L \cap F$ is a Galois extension of~$K$ which is contained in~$L$.
The group structure of $S_n$ (more precisely, the fact that the alternating group~$A_n$
is the only nontrivial normal subgroup of~$S_n$)
hence implies that $[L \cap F:K] \le 2$; for, if $L\cap F$
were equal to~$L$, then $L$ would be a subfield of~$F$, which is excluded by assumption.

Assume first $L \cap F = K$, then $\Gal(LF/K) \cong \Gal(L/K) \times \Gal(F/K)$ and thus
$$ \#Y = \#\cA_n(d) \cdot \#G + \#S_n \cdot \#C - \#\cA_n(d) \cdot \#C,$$
from which the claimed formula follows by dividing by $\#X = \#G \cdot \# S_n$.

Assume now that $L \cap F =: N$ is a quadratic extension of~$K$. Then $X$ is isomorphic to
the index~$2$ subgroup of $\Gal(L/K) \times \Gal(F/K)$ consisting of those pairs
of elements $(g,h)$ such that $g$ and $h$ project to the same element in $\Gal(N/K)$.
The elements of
$\cA_n(d)$ that project to the identity of $\Gal(N/K)$ are precisely those in $\cA_n^+(d)$.
In a similar way we denote by $C^+$ those elements of $C$ projecting to the identity
of $\Gal(N/K)$, and by $C^-$ the others. Then we have
$$\#Y = \#\cA_n(d) \cdot \frac{\#G}{2} + \frac{\# S_n}{2} \cdot \#C -
\#\cA_n^+(d) \cdot \#C^+ - \#\cA_n^-(d) \cdot \#C^-.$$
Dividing by $\#X = \frac{\#S_n \cdot \#G}{2}$ we obtain
$$\frac{\#Y}{\#X} = a + c - (1+\delta)ac,
\textnormal{ where }
\delta = \frac{\#C^+ - \#C^-}{\#C} \cdot \frac{\#\cA_n^+(d) - \#\cA_n^-(d)}{\#\cA_n(d)}.$$
The claim is now a consequence of Corollary~\ref{cor:Sn}.
\end{proof}

\begin{lem}\label{lem:seq}
Let $(a_n)_{n\ge 1}$ be a sequence of non-negative real numbers such that $\sum_{n=1}^\infty a_n$ diverges.
\begin{enumerate}[(a)]
\item \label{part:seq:a}
Let $\gamma >0$, $b_0 \in \RR$. Assume $a_n < \frac{1}{\gamma}$ for all $n\ge 1$.
We define a sequence $(b_n)_{n \ge 0}$ by the rule
$$ b_n := b_{n-1} + a_n - \gamma b_{n-1} a_n$$
for all $n\ge 1$.
Then the sequence $(b_n)_{n \ge 1}$ tends to~$1/\gamma$ for
$n \to \infty$.

\item \label{part:seq:b}
Let $(\delta_n)_{n \ge 1}$ be a sequence of real numbers tending to~$0$ and
let $c_0 \in \RR$.
Assume $\limsup_{n\to \infty} a_n < 1$.
We define the (modified) {\em inclusion-exclusion sequence} as
$$ c_n := c_{n-1} + a_n - (1+\delta_n) c_{n-1} a_n \textnormal{ for } n \ge 1.$$
Then the sequence $(c_n)_{n \ge 1}$ tends to~$1$.
\end{enumerate}
\end{lem}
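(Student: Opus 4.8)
The plan is to reduce both recursions to the study of a single weighted product, via the substitution that measures the distance of the iterate from its conjectured limit.

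For part~\eqref{part:seq:a} I would set $u_n := 1 - \gamma b_n$, so that the assertion becomes $u_n \to 0$. A direct computation turns the affine recursion into the purely multiplicative one
\[
u_n = u_{n-1}(1 - \gamma a_n),
\]
whence $u_n = u_0 \prod_{k=1}^n (1 - \gamma a_k)$. Because $0 \le a_k < \tfrac1\gamma$, every factor lies in $(0,1]$, and since $\sum_k a_k$ diverges so does $\sum_k \gamma a_k$; the standard infinite-product criterion (equivalently, taking logarithms and using $\log(1-x) \le -x$) then gives $\prod_{k=1}^n (1-\gamma a_k) \to 0$. Hence $u_n \to 0$ irrespective of the starting value $b_0$, which is exactly $b_n \to 1/\gamma$.

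For part~\eqref{part:seq:b} I would run the same idea with $v_n := 1 - c_n$. Writing $\rho_n := 1 - (1+\delta_n) a_n$, the recursion becomes the inhomogeneous linear one
\[
v_n = \rho_n v_{n-1} + \delta_n a_n,
\]
and solving it explicitly yields
\[
v_n = v_0 \prod_{k=1}^n \rho_k + \sum_{k=1}^n \delta_k a_k \prod_{j=k+1}^n \rho_j.
\]
Since $\limsup_n a_n < 1$ and $\delta_n \to 0$, there is an index $K$ with $0 < \rho_k < 1$ and $1 + \delta_k \ge \tfrac12$ for all $k \ge K$; combined with the divergence of $\sum_k a_k$ this gives $\prod_{k=1}^n \rho_k \to 0$ as before, so the homogeneous term drops out in the limit.

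The hard part will be showing that the forcing sum $\sum_{k=1}^n \delta_k a_k \prod_{j=k+1}^n \rho_j$ also tends to $0$, the subtlety being that $\delta_k$ is small only asymptotically, whereas the sum runs over all~$k$. I would split it at a large index $M \ge K$. The head $\sum_{k \le M}$ is a fixed finite sum, and for each fixed $k$ the weight $\prod_{j=k+1}^n \rho_j$ tends to $0$ as $n \to \infty$ (its tail factor $\prod_{j=K}^n \rho_j$ does), so the head tends to $0$ with~$n$. For the tail I would use $a_k \le 2(1-\rho_k)$ (valid for $k \ge K$, since $1 - \rho_k = (1+\delta_k)a_k$) together with the telescoping identity
\[
(1-\rho_k)\prod_{j=k+1}^n \rho_j = \prod_{j=k+1}^n \rho_j - \prod_{j=k}^n \rho_j,
\]
which collapses $\sum_{k=M+1}^n a_k \prod_{j=k+1}^n \rho_j \le 2\bigl(1 - \prod_{j=M+1}^n \rho_j\bigr) \le 2$. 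Bounding $|\delta_k|$ by $\sup_{k>M}|\delta_k|$ on the tail then gives a tail contribution at most $2\sup_{k>M}|\delta_k|$, which is small once $M$ is large. A routine two-$\varepsilon$ argument (first fix $M$ large, then let $n\to\infty$) finishes $v_n \to 0$, i.e.\ $c_n \to 1$. I expect this telescoping bound on the weighted tail to be the crux of the whole lemma; part~\eqref{part:seq:a} is just the special case $\delta_n \equiv 0$, which is why no such estimate is needed there.
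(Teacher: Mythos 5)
Your proof is correct, and for part (a) it is essentially identical to the paper's: the paper also substitutes $r_n := 1 - \gamma b_n$, factors the recursion into $r_n = (1-\gamma b_0)\prod_{k=1}^n (1-\gamma a_k)$, and kills the product by taking logarithms and using divergence of $\sum a_k$. For part (b), however, you take a genuinely different route. The paper never solves the recursion explicitly: it fixes a small $\epsilon > 0$, picks $N$ with $|\delta_n| < \epsilon$ and $a_n < \frac{1}{1+\epsilon}$ for $n \ge N$, first arranges $c_N \ge 0$ (if $c_n$ stayed negative it would keep growing by at least $a_n$, and $\sum a_n$ diverges), and then sandwiches: it runs the two constant-coefficient sequences $b_n$ (with $\gamma = 1+\epsilon$) and $d_n$ (with $\gamma = 1-\epsilon$) started at $c_N$, proves $0 \le b_n \le c_n \le d_n$ by induction (this monotonicity is where $a_n \le \frac{1}{1+\epsilon}$ is used), and invokes part (a) as a black box to squeeze $c_n$ into $[\frac{1}{1+\epsilon}-\epsilon,\, \frac{1}{1-\epsilon}+\epsilon]$ for large $n$. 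You instead solve the inhomogeneous linear recursion for $v_n = 1 - c_n$ by variation of constants and control the forcing sum with the telescoping bound $\sum_{k=M+1}^n a_k \prod_{j=k+1}^n \rho_j \le 2\bigl(1 - \prod_{j=M+1}^n \rho_j\bigr) \le 2$, which is indeed the crux of your argument. Both proofs are complete; yours is more quantitative (it gives the explicit error decomposition $|1-c_n| \le |1-c_0|\,\bigl|\prod_{k\le n}\rho_k\bigr| + \mathrm{head}(M,n) + 2\sup_{k>M}|\delta_k|$, which could be made effective) and it absorbs a possibly negative $c_0$ automatically through the explicit formula, whereas the paper needs the separate $c_N \ge 0$ step; the paper's sandwich argument is softer and shorter, reducing (b) entirely to (a). One cosmetic slip on your side: when $a_k = 0$ you get $\rho_k = 1$, so the claim $0 < \rho_k < 1$ for $k \ge K$ should read $0 < \rho_k \le 1$; nothing downstream is affected, since what your product estimate really uses is $\rho_k \le \exp(-a_k/2)$, coming from $1+\delta_k \ge \tfrac12$.
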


\begin{proof}
\eqref{part:seq:a} We let
\begin{multline*}
r_n := 1 - \gamma b_n
= 1 - \gamma (b_{n-1} + a_n - \gamma b_{n-1} a_n)
= (1 - \gamma b_{n-1})(1-\gamma a_n)\\
= (1-\gamma b_0) (1- \gamma a_1) (1- \gamma a_2) \cdots (1- \gamma a_n).
\end{multline*}
To see that the limit of $(\gamma b_n)_{n \ge 0}$ is~$1$, we take the logarithm
of $(1- \gamma a_1) (1- \gamma a_2) \cdots (1- \gamma a_n)$:
$$ \sum_{i=1}^n \log(1- \gamma a_i)
= - \gamma \sum_{i=1}^n a_i - \sum_{i=1}^n \sum_{j=2}^\infty \frac{(\gamma a_i)^j}{j}
\le - \gamma \sum_{i=1}^n a_i.$$
By our assumption this diverges to $-\infty$ for $n\to \infty$, so that
$\lim_{n \to \infty} r_n = 0$, proving the lemma.

\eqref{part:seq:b}
Let $\min(1, \frac{1}{\limsup_{n\to \infty} a_n} - 1) > \epsilon > 0$.
There is $N$ such that $|\delta_n| < \epsilon$ and $a_n <\frac{1}{1+\epsilon}$ for all $n\ge N$.
By enlarging $N$ if necessary we may also assume $c_N \ge 0$.
The reason for the latter is that $c_{N+n} > c_N + \sum_{i=1}^n a_{N+i}$
if $c_{N+i} < 0$ for all $0 \le i \le n$.

We consider the two sequences
$$ b_N := c_N \textnormal{ and }
   b_n = b_{n-1} + a_n - (1+\epsilon) b_{n-1} a_n \textnormal{ for } n > N$$
and
$$ d_N := c_N \textnormal{ and }
   d_n = d_{n-1} + a_n - (1-\epsilon) d_{n-1} a_n \textnormal{ for } n > N.$$
By \eqref{part:seq:a} we know $\lim_{n\to \infty} b_n = \frac{1}{1+\epsilon}$ and
$\lim_{n\to \infty} d_n = \frac{1}{1-\epsilon}$.
For $n \ge N$ by induction we obtain the estimate:
$$ 0 \le b_n \le c_n \le d_n.$$
Thus, there is $M$ such that $\frac{1}{1+\epsilon}-\epsilon \le c_n \le \frac{1}{1-\epsilon} + \epsilon$
for all $n \ge M$.
As $\epsilon$ is arbitrary, we find $\lim_{n \to \infty} c_n = 1$.
\end{proof}

\begin{prop}\label{prop:fields}
Let $1 \le d \in \NN$, $K$ be a field and let $L_n$ for $n \in \NN$ be Galois extensions of~$K$ with
Galois group $\Gal(L_n/K) \cong S_{N_n}$ such that $N_n < N_{n+1}$ for all $n \ge 1$.
Denote by $G_n$ the Galois group of the composite field $L_1L_2\cdots L_n$ over~$K$
and for $1 \le i \le n$ denote by $\pi_i: G_n \to \Gal(L_i/K)$ the natural projection. Consider
$$ c_n := \frac{\#\{g \in G_n \;|\; \exists i \in \{1,\dots,n\}:
\pi_i(g) \in \Gal(L_i/K) \cong S_{N_i} \textnormal{ contains at least one $d$-cycle } \}}{\# G_n}.$$

Then the sequence $c_n$ tends to~$1$ for $n\to \infty$.
\end{prop}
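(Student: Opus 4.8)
The plan is to prove the claim by induction on~$n$, realising the passage from $c_{n-1}$ to $c_n$ as a single step of the modified inclusion-exclusion recursion of Lemma~\ref{lem:seq}\eqref{part:seq:b}. Write $F_{n-1} := L_1 L_2 \cdots L_{n-1}$, so that $G_{n-1} = \Gal(F_{n-1}/K)$ and $\Gal(L_1\cdots L_n/K) = G_n$. I would apply Lemma~\ref{lem:twofields} with $L := L_n$, $F := F_{n-1}$, and
$$ C := \{\, h \in G_{n-1} \;|\; \exists\, i \le n-1:\ \pi_i(h)\ \textnormal{contains a $d$-cycle}\,\},$$
so that $\frac{\#C}{\#G_{n-1}} = c_{n-1}$ and the distinguished set~$Y$ of that lemma is exactly the numerator set defining~$c_n$. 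Reading off the conclusion of Lemma~\ref{lem:twofields} then gives the recursion
$$ c_n = c_{n-1} + \alpha_n - (1+\delta_n)\, c_{n-1}\,\alpha_n, \qquad \alpha_n := \frac{\#\cA_{N_n}(d)}{N_n!} = a\big(\lfloor \tfrac{N_n}{d}\rfloor\big),$$
with $\delta_n$ bounded as in that lemma.

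Before the recursion can be invoked I must verify the hypotheses of Lemma~\ref{lem:twofields} for all large~$n$. Since $(N_n)$ is strictly increasing, $N_n \ge \max(5,2d)$ holds for all $n$ beyond some index~$n_0$; for $n \le n_0$ the values $c_n$ merely play the role of an initial condition, which Lemma~\ref{lem:seq}\eqref{part:seq:b} permits to be arbitrary. The substantial point, and the step I expect to be the main obstacle, is checking that $L_n$ is not a subfield of~$F_{n-1}$. I would argue via composition factors. If $L_n \subseteq F_{n-1}$, then $S_{N_n}$ is a quotient of $H := \Gal(F_{n-1}/K)$, so the nonabelian simple group $A_{N_n}$ occurs among the composition factors of~$H$. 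Now $H$ is a subdirect product of $S_{N_1}, \dots, S_{N_{n-1}}$, and for subdirect products one shows by induction on the number of factors that every composition factor already occurs in one of the~$S_{N_i}$: the kernel of the projection $H \twoheadrightarrow S_{N_{n-1}}$ has image in each remaining~$S_{N_i}$ a normal subgroup, hence one of $1$, $A_{N_i}$, $S_{N_i}$, and that kernel is itself subdirect in the product of these images. Consequently the composition factors of~$H$ lie in $\{\mathbb{Z}/2\mathbb{Z}\} \cup \{A_{N_i} \mid i \le n-1\}$; as $N_n > N_i$ for all $i \le n-1$, the group $A_{N_n}$ is not among them, a contradiction. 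The point is that subdirect products, unlike arbitrary subgroups of a product, cannot manufacture a \emph{larger} alternating factor.

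Finally I would feed the recursion into Lemma~\ref{lem:seq}\eqref{part:seq:b}, taking its sequence to be $(\alpha_n)$ and its error sequence to be $(\delta_n)$. By Lemma~\ref{lem:seq-Sn} and the error estimate following it, $\alpha_n = a(\lfloor N_n/d\rfloor) \to 1 - \exp(-\tfrac{1}{d})$ as $N_n \to \infty$; since this limit is strictly positive, the series $\sum_n \alpha_n$ diverges, and since it is strictly less than~$1$, we have $\limsup_n \alpha_n < 1$. The bound on $|\delta_n|$ furnished by Lemma~\ref{lem:twofields} (ultimately Corollary~\ref{cor:Sn}) is $\frac{1}{N_n-1}$ times a factor that stays bounded in terms of~$d$, so $\delta_n \to 0$ because $N_n \to \infty$. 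All hypotheses of Lemma~\ref{lem:seq}\eqref{part:seq:b} being satisfied, we conclude $c_n \to 1$, which is the assertion of the proposition.
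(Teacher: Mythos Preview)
Your proof is correct and follows the same overall architecture as the paper: obtain the recursion $c_n = c_{n-1} + \alpha_n - (1+\delta_n)\,c_{n-1}\alpha_n$ from Lemma~\ref{lem:twofields} and feed it into Lemma~\ref{lem:seq}\eqref{part:seq:b}, after checking $\sum \alpha_n$ diverges, $\limsup \alpha_n < 1$, and $\delta_n \to 0$. The only substantive difference is in how you verify the hypothesis $L_n \not\subseteq L_1\cdots L_{n-1}$. The paper quotes a structural result from Fried--Jarden (Lemma~18.3.9 of~\cite{FJ}) to the effect that $\Gal(L_1\cdots L_n / K_1\cdots K_n) \cong A_{N_1}\times\cdots\times A_{N_n}$, where $K_i$ is the unique quadratic subfield of~$L_i$; the non-containment is then read off from the order of this group. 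Your composition-factor argument is more elementary and self-contained: a subdirect product of $S_{N_1},\dots,S_{N_{n-1}}$ cannot have the simple group $A_{N_n}$ as a Jordan--H\"older factor once $N_n > N_i$ for all $i<n$, which is exactly what is needed. Both routes are valid; yours trades the external citation for a short induction, while the paper's route yields finer information about the compositum that is not actually used here.
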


\begin{proof}
Without loss of generality we can assume $\max(5,2d) \le N_1$.
Let $c_0 := 0$ and $a_n := a(\lfloor \frac{N_n}{d} \rfloor) = \frac{\#\cA_{N_n}(d)}{\# S_{N_n}}$.
By Lemmas \ref{lem:seq-Sn} and~\ref{lem:Sn} it is clear that $\sum_{n=1}^\infty a_n$ diverges.

If we call $K_i$ the unique quadratic extension of~$K$ inside~$L_i$, then
Lemma~18.3.9 of~\cite{FJ} shows that $\Gal(L_1\cdots L_n/K_1\cdots K_n) \cong
A_{N_1} \times \dots \times A_{N_n}$ for all~$n\ge 1$. This implies that $L_n$
cannot be a subfield of $L_1\cdots L_{n-1}$ for any $n \ge 2$.

Lemma~\ref{lem:twofields} inductively
gives the formula $c_n = a_n + c_{n-1} - (1+\delta_n)a_nc_{n-1}$
for $n \ge 1$, where $\delta_n$ is bounded by
$$|\delta_n| \le \frac{1}{N_n-1} \cdot \frac{2}{1 - \exp(-\frac{1}{d}) - \frac{2}{(1+\lceil \frac{N_n}{d} \rceil)! d^{1+\lceil \frac{N_n}{d} \rceil}}},$$
which clearly tends to~$0$ for $n \to \infty$. Lemma~\ref{lem:seq} yields the claim on the limit.
\end{proof}

By applying Chebotarev's density theorem and noting that the set in the proposition
is conjugation invariant, we obtain the following corollary.

\begin{cor}\label{cor:fields}
Let $1 \le d \in \NN$, $K$ be a number field and let $L_n$ for $n \in \NN$ be Galois extensions of~$K$ with
Galois group $\Gal(L_n/K) \cong S_{N_n}$ such that $N_n < N_{n+1}$ for all $n \ge 1$.

Then the set of primes of~$K$
$$\{ \fp \;|\; \exists i \in \{1,\dots,n\}:
\pi_i(\Frob_\fp) \in \Gal(L_i/K) \cong S_{N_i} \textnormal{ contains at least one $d$-cycle } \}$$
has a density, and the density is equal to~$c_n$ from Proposition~\ref{prop:fields}
and hence tends to~$1$ for $n \to \infty$.
Here $\Frob_\fp = \Frob_{\fP/\fp}$ for any prime~$\fP$
of the composite field $L_1L_2\cdots L_n$ above~$\fp$.
\end{cor}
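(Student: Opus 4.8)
The plan is to deduce this statement from Proposition~\ref{prop:fields} by a single application of Chebotarev's density theorem, so the argument is short and the only points requiring care are the well-definedness of the condition $\Frob_\fp \in Y_n$ and the conjugation invariance of the relevant set. First I would fix $n$ and set $M := L_1 L_2 \cdots L_n$, a finite Galois extension of~$K$ with Galois group $G_n$, and recall that only finitely many primes of~$K$ ramify in $M/K$. Since a finite set of primes has density zero, these may be discarded, so I may restrict attention to the primes $\fp$ of~$K$ that are unramified in~$M$. I would then write
$$ Y_n := \{ g \in G_n \;|\; \exists\, i \in \{1,\dots,n\}: \pi_i(g) \in \Gal(L_i/K) \cong S_{N_i} \textnormal{ contains at least one $d$-cycle} \}, $$
which is exactly the set counted in Proposition~\ref{prop:fields}, so that $\#Y_n/\#G_n = c_n$.

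Next I would verify that $Y_n$ is stable under conjugation in~$G_n$. If $g' = hgh^{-1}$ with $h \in G_n$, then for each~$i$ the projection $\pi_i$ is a group homomorphism, so $\pi_i(g') = \pi_i(h)\,\pi_i(g)\,\pi_i(h)^{-1}$ is conjugate to $\pi_i(g)$ in $S_{N_i}$ and therefore has the same cycle type; in particular $\pi_i(g')$ contains a $d$-cycle if and only if $\pi_i(g)$ does. Hence membership in $Y_n$ depends only on the conjugacy class of~$g$, so $Y_n$ is a union of conjugacy classes of~$G_n$. For an unramified prime $\fp$ the Frobenius elements $\Frob_{\fP/\fp}$ attached to the various primes~$\fP$ of~$M$ above~$\fp$ form a single conjugacy class of~$G_n$; consequently the condition $\Frob_\fp \in Y_n$ is independent of the choice of~$\fP$, and the notation $\Frob_\fp$ is unambiguous for this purpose.

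Finally, Chebotarev's density theorem applied to the union of conjugacy classes $Y_n \subseteq G_n$ yields that the set of primes~$\fp$ of~$K$ with $\Frob_\fp \in Y_n$ has a density, equal to $\#Y_n/\#G_n = c_n$. Combining this with the limit $c_n \to 1$ established in Proposition~\ref{prop:fields} gives the assertion. I do not expect any genuine obstacle here beyond bookkeeping: the substantive work has already been carried out in Proposition~\ref{prop:fields}, and the present step merely translates the group-theoretic proportion $c_n$ into a density of primes via Chebotarev. The one subtlety worth stating explicitly is the verification that the set $Y_n$ is conjugation invariant, since this is precisely what makes the Frobenius condition well-defined on primes and hence renders Chebotarev applicable.
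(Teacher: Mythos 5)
Your proposal is correct and follows exactly the paper's route: the paper's entire proof is the remark that the corollary follows from Proposition~\ref{prop:fields} ``by applying Chebotarev's density theorem and noting that the set in the proposition is conjugation invariant,'' which is precisely your argument, with your additional details (cycle type preserved under conjugation via the homomorphisms $\pi_i$, well-definedness of $\Frob_\fp$, discarding the finitely many ramified primes) being the natural bookkeeping the paper leaves implicit.
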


The following is the main theorem of this paper concerning the density of primes
with prescribed residue degree in a composite of field extensions with symmetric Galois groups.

\begin{thm}\label{thm:general}
Let $1 \le d \in \NN$, $K$ be a number field and let $M_n$ for $n \in \NN$ be field extensions of~$K$ with
splitting field $L_n$ over~$K$ having Galois group $\Gal(L_n/K) \cong S_{N_n}$ such that
$N_n < N_{n+1}$ for all $n \ge 1$.

Then the set of primes of~$K$
$$\{ \fp \;|\; \exists i \in \{1,\dots,n\}, \exists \fP/\fp
\textnormal{ prime of $M_i$ of residue degree }d \}$$
has a density, and the density is equal to~$c_n$ from Proposition~\ref{prop:fields}
and hence tends to~$1$ for $n \to \infty$.
\end{thm}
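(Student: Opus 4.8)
The plan is to reduce Theorem~\ref{thm:general} to Corollary~\ref{cor:fields} by translating the arithmetic condition ``$M_i$ has a prime of residue degree~$d$ above~$\fp$'' into the group-theoretic condition ``$\pi_i(\Frob_\fp)$ contains a $d$-cycle'' that appears in the corollary. The bridge is Proposition~\ref{prop:nt}, applied to each extension $M_i/K$ with its Galois closure $L_i$.

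First I would fix, for each~$i$, a primitive element $a_i$ of $M_i/K$ and realise the isomorphism $\Gal(L_i/K) \cong S_{N_i}$ concretely as the permutation representation $\psi_i$ of Proposition~\ref{prop:nt} on the $N_i = [M_i:K]$ roots of the minimal polynomial of $a_i$; under $\psi_i$ the subgroup $\Gal(L_i/M_i)$ is a point stabiliser $\Stab_{S_{N_i}}(1)$, so that $\psi_i$ is indeed the standard action of $S_{N_i}$ on $N_i$ letters. Since the property of containing a $d$-cycle depends only on the conjugacy class of an element, it is insensitive to the choice of $a_i$ and to the labelling of the roots, and hence agrees with the notion used in Corollary~\ref{cor:fields}.

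Next I would invoke the functoriality of Frobenius elements in a tower: writing $G_n = \Gal(L_1\cdots L_n/K)$, for a prime $\fp$ of~$K$ unramified in the composite and a prime $\fP$ of $L_1\cdots L_n$ above it, the restriction of $\Frob_\fp = \Frob_{\fP/\fp}$ to $L_i$ equals $\Frob_{(\fP\cap L_i)/\fp}$, that is, $\pi_i(\Frob_\fp)$. Applying Proposition~\ref{prop:nt} to $M_i/K$ with $\fP\cap L_i$ in the role of its~$\fP$ then shows that the residue degrees of the primes of $M_i$ above $\fp$ are exactly the cycle lengths of $\psi_i\big(\pi_i(\Frob_\fp)\big)$. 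In particular, there is a prime of $M_i$ above $\fp$ of residue degree~$d$ if and only if $\pi_i(\Frob_\fp)$ contains a $d$-cycle. Taking the union over $i \in \{1,\dots,n\}$ shows that the prime set of Theorem~\ref{thm:general} coincides, outside the primes ramifying in $L_1\cdots L_n$, with the set of Corollary~\ref{cor:fields}.

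Finally, since $L_1\cdots L_n/K$ is a finite extension, only finitely many primes of~$K$ ramify in it, so the two prime sets differ by a set of density~$0$. Hence the set in Theorem~\ref{thm:general} has a density, equal to the density $c_n$ of Corollary~\ref{cor:fields}, which tends to~$1$ for $n \to \infty$ by Proposition~\ref{prop:fields}. The only point requiring genuine care is the identification of the abstract isomorphism $\Gal(L_i/K)\cong S_{N_i}$ with the geometric permutation action~$\psi_i$ on the conjugates of $M_i$; once this identification is fixed as above, the remaining ingredients are the standard compatibility of Frobenius with the projections~$\pi_i$ and the negligibility of the ramified primes, so I do not expect a serious obstacle beyond this bookkeeping.
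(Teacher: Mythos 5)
Your proof is correct and takes essentially the same route as the paper, whose entire argument is the one-line observation that, by Proposition~\ref{prop:nt}, the prime set of the theorem coincides with that of Corollary~\ref{cor:fields}. Your extra bookkeeping (realising the isomorphism $\Gal(L_i/K)\cong S_{N_i}$ via the permutation action $\psi_i$ on the conjugates of a primitive element, the compatibility of Frobenius with the projections $\pi_i$, and discarding the finitely many ramified primes) simply makes explicit what the paper leaves implicit.
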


\begin{proof}
Because of Proposition~\ref{prop:nt} the set of primes in the theorem is the same
as the set in Corollary~\ref{cor:fields}.
\end{proof}

\subsection*{End of the proof}

\begin{proof}[Proof of Theorem~\ref{thm:main}.]
Since $\dim_\CC S_k(1)$ tends to~$\infty$ for $k \to \infty$ (for even~$k$),
Maeda's conjecture implies the existence of newforms $f_n$ of level one
and increasing weight (automatically without complex multiplication because
of level~$1$)
such that their coefficient fields $M_n := \QQ_{f_n}$ satisfy the assumptions of
Theorem~\ref{thm:general}.

For each~$n$ and each prime $\fP$ of $M_n$ consider the Galois representation
$\rho_{f_n,\fP}^\proj: \Gal(\Qbar/\QQ) \to \PGL_2(\Fbar_p)$ attached to~$f_n$.
Theorem~3.1 of Ribet~\cite{Ri} implies that for each $f_n$ and all but possibly
finitely many $\fP$, its image is equal to $\PGL_2(\FF_\fP)$,
if the residue field $\FF_\fP$ of $\fP$ has odd degree over its prime field,
and equal to $\PSL_2(\FF_\fP)$ if the residue degree is even.
We will abbreviate this by $\PXL_2(\FF_\fP)$.

Consequently, the set of primes (of~$\QQ$)
$$\{ p \;|\; \exists i \in \{1,\dots,n\}, \exists \fP/p
\textnormal{ prime of $M_i$ s.t. } \rho_{f_i,\fP}^\proj \cong \PXL_2(\FF_{p^d}) \}$$
has the same density as the corresponding set in Theorem~\ref{thm:general},
implying Theorem~\ref{thm:main}.
\end{proof}

\bibliography{MIGP}
\bibliographystyle{amsalpha}

\end{document}